\newtheorem{theorem}{Theorem}
\newtheorem{lemma}{Lemma}
\newtheorem{proposition}{Proposition}
\newtheorem{example}{Example}
\begin{document}

\title{On the Fiber Characters of $\mathbb F^*_{p^m}$ and \\
    related Polynomial Algebras         
 } 
\author{Michele Elia
\thanks{Politecnico di Torino
Corso Duca degli Abruzzi 24, I - 10129 Torino -- Italy; ~~ e-mail: elia@polito.it }}

%\date{}
%\pagestyle{empty}

\maketitle

\thispagestyle{empty}

\begin{abstract}
\noindent
Let $p$ be a prime, $m$ be a positive integer ( $m \geq 1$, and $m \geq 2$ if $p=2$), and $\chi_n$ be a multiplicative complex character
 on $\mathbb F^*_{p^m}$ with order $n| (p^m-1)$. We show that a partition
$\mathcal A_1 \cup \mathcal A_2 \cup \cdots \cup \mathcal A_n$
 of $\mathbb F^{*}_{p^m}$ is the partition by fibers of $\chi_n$ if and only if these fibers 
 satisfy certain additive properties. This is equivalent to showing that the set of multivariate characteristic polynomials of these
 fibers, completed with the constant polynomial $1$, is the basis of an $(n+1)$-dimensional commutative algebra with identity
  in the ring $\mathbb Q[x_1,\ldots,x_n]/\langle x_1^p-1, \ldots, x_n^p-1 \rangle$.
\end{abstract}

\vspace{1mm}
\noindent
{\bf Mathematics Subject Classification (2000): 11A15, 11N69, 11R32}

\vspace{1mm}
\noindent
{\bf Key words: } {\em nth power residue, cyclotomic coset, character, polynomial ring}.

\section{Introduction}
In 1952, Perron gave some additive properties of the fibers of the
quadratic character on $\mathbb F_p$. Specifically in \cite{perron}, he showed that if
 $\mathfrak A, ~\mathfrak{B} \subset \mathbb F_p$ are the subsets of quadratic residues
  and non-residues, respectively, and letting 
 $ ~   d_p = \frac{p-1}{4}~~\mbox{if}~~  p =1\bmod 4, ~~ \mbox{and}~~ d_p = \frac{p+1}{4} ~~\mbox{if}~~ p=3 \bmod 4 ~$, then

\begin{enumerate}
   \item Every element of $\mathfrak A$ [respectively $\mathfrak B$] can be written as the sum of two elements of $\mathfrak A$
    [respectively $\mathfrak B$] in exactly $d_p - 1$ ways.
   \item  Every element of  $\mathfrak A$ [respectively $\mathfrak B$] can be written as the sum of
two elements of $\mathfrak B$ [respectively $\mathfrak A$] in exactly $d_p $ ways.
\end{enumerate}

\noindent
It was natural to inquire just how strong this result is, and to what extent it may hold for any
character $\chi_n$, other than $\chi_2$.
 In  \cite{monico}  it is shown that these additive properties uniquely characterize the even partition of $\mathbb F_p$
  into quadratic residues and non-residues. 
 In \cite{monico1}, the even restriction is removed, and the result is generalized to fibers of arbitrary
  multiplicative character $\chi_n$ on $\mathbb F_p$  ($n$ being a divisor of $(p-1)$), with suitable
 cyclotomic numbers in place of  the constants $d_p$ above. 
Lastly, in \cite{elia}, the generalization of the even partition (i.e. by the quadratic character $\chi_2$)
 to every finite field of odd characteristic, that is, the partition of $\mathbb F_{p^m}$ into squares
 and non-squares, is discussed and settled.  
Perron's view is attractive, but the formulation of the problem purely in terms of characteristic polynomials
 and their algebras permits a full description and proof of facts that occur in every finite field.
The purpose of this paper is to prove this definitive result.

\section{Preliminary results}
Let $\mathbb F_{p^m}$ be a finite field with  $p^m$ elements generated by a root $\gamma$ of a 
 primitive irreducible polynomial $p(x)=x^m+a_{m-1}x^{m-1}+\ldots+a_0$ over $\mathbb F_p$. 
Let $\mathcal B=\{1, \gamma, \ldots, \gamma^{m-1} \}$ be a basis of  $\mathbb F_{p^m}$,
any non-zero element $\beta \in \mathbb F_{p^m}$ is represented either as a power $\gamma^h$
 or in the basis $\mathcal B$ as $ \sum_{i=1}^m b_i \gamma^{i-1}$ with $\beta_i \in \mathbb F_p$. 
In the following, $\beta$ will be interchangeably indicated with the $m$-dimensional vector 
$\mathbf b= [ b_1,b_2, \ldots, b_m]  \in \mathbb F^m_p$, whenever necessary.  \\
A multiplicative complex character is
  an isomorphism $\chi: \mathbb F^*_{p^m} \rightarrow \mathcal C_{p^m-1}$ between the
 multiplicative cyclic group $\mathbb F^*_{p^m}$ and
  the complex multiplicative group $\mathcal C_{p^m-1}$ of the units of order $p^m-1$
 in the complex field $\mathbb C$. 
Let $n>1$ be a non-trivial positive divisor of $p^m-1$, that is 
$n \cdot s=p^m-1$ (if $p=2$ then $m$ must be greater than $1$), then the subset consisting 
of the powers of $\rho=\gamma^n$ is a cyclic subgroup of order $s$ of $\mathbb F^*_{p^m}$. \\
Let $\zeta_n$ be a primitive $n$th complex root of unity, i.e. $\zeta_n$ satisfies the
 $n$th cyclotomic polynomial. 
A character of order $n$ is explicitly defined as the mapping
 $\chi_n: \gamma  \rightarrow  e^{\frac{2\pi i}{n}} =\zeta_n$, that is
$$  \chi_n(\gamma^{n~t+h})   = e^{ \frac{2\pi i(n~t+h)}{n}}    = e^{\frac{2\pi i }{n}h}  = \zeta^h_n   ~~\forall t \in \mathbb Z
 ~~,~~   \mbox{and}~~  h \in \{0,1, \ldots, n-1 \}~~. $$

\noindent
For each integer $0 \leq k \leq (n-1)$ let $\mathcal A_{k}$ be the fiber  $\chi^{-1}( \zeta_n^{k-1})$, 
then the fiber $\mathcal A_1=\chi^{-1}(1)$ is the subgroup of $\mathbb F^*_{p^m}$
 consisting of the $n$-th powers of $\gamma$, and the fiber  $\mathcal A_k=\chi^{-1}( \zeta_n^{k-1})$,
 with $k>1$, is clearly the coset $\gamma^{k-1}\mathcal A_1$. 
 We have $|\mathcal A_k| = \frac{p^m-1}{n} = s$, and for each $1 \leq k \leq n$, the corresponding multivariate characteristic polynomial is
$$   q_k(\mathbf x)=q_k(x_1,\ldots,x_n) = \sum_{\beta \in \mathcal A_k}  \prod_{i=1}^m x^{b_i}_i    \in \mathbb Z[x_1,\ldots,x_n] ~~.    $$
The set of fibers $\mathcal A_1,\ldots ,\mathcal A_n$ form a partition of 
$\mathbb F^*_{p^m}=\{1, \gamma, \gamma^2,\ldots , \gamma^{p^m-2}\}$, thus, defining the polynomial 
$q_0(\mathbf x)=1$ which is the characteristic polynomial of the set $\{ 0 \}$,  we have
$$       \sum_{k=0}^n  q_k(\mathbf x) = \prod_{i=1}^{m} 	\frac{x_i^p-1}{x_i-1}  ~~.  $$ 
The following lemmas and theorem show that the set of these $n+1$ multivariate polynomials 
is the basis of an algebra of dimension $n+1$ in the polynomial ring 
$\mathfrak R_n[\mathbf x]=\mathbb Q[\mathbf x]/\langle x_1^p-1, \ldots, x_n^p-1 \rangle$,
where $\langle x_1^p-1, \ldots, x_n^p-1 \rangle$ denotes the ideal generated by the polynomials
 included in brackets.

\noindent
Since the fiber $\mathcal A_1$ is a sub-group of order $s=\frac{p^m-1}{n}$ of $\mathbb F^*_{p^m}$, 
and the remaining fibers are its cosets,
 which form a partition of $\mathbb F^*_{p^m}$, 
 the following proposition easily follows
\begin{proposition}
 \label{pro1}
 The set $\{q_0(\mathbf x), q_1(\mathbf x), \ldots, q_n(\mathbf x)\}$ of $n+1$ multivariate polynomials is a basis of a 
 $\mathbb Q$-subspace 
 $\mathbf V_{n+1}$ of dimension $n+1$ in the $p^m$-dimensional vector space 
 $\mathbb Q[\mathbf x]/\langle x_1^p-1, \ldots, x_n^p-1 \rangle$   of multivariate polynomials of degree
 at most $p - 1$ in each variable $x_i$. 
\end{proposition}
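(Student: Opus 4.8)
The plan is to establish the statement by showing that the $n+1$ polynomials $q_0,q_1,\ldots,q_n$ are linearly independent over $\mathbb Q$ in the quotient ring; being a basis of the $(n+1)$-dimensional subspace $\mathbf V_{n+1}$ they generate is then automatic. The entire argument rests on one structural observation: upon reduction modulo the ideal $\langle x_1^p-1, \ldots \rangle$, the residues of the monomials $\prod_{i=1}^m x_i^{e_i}$ with each exponent in the range $0 \leq e_i \leq p-1$ constitute a $\mathbb Q$-basis of the quotient, of cardinality $p^m$. This is exactly the standard monomial basis underlying the stated dimension, and it is the object against which all supports will be computed.

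First I would make the correspondence between field elements and these basis monomials explicit. Each $\beta \in \mathbb F_{p^m}$, including $\beta=0$, is written in the basis $\mathcal B$ as a coordinate vector $\mathbf b=[b_1,\ldots,b_m]$ with entries $b_i \in \{0,1,\ldots,p-1\}$, and the assignment $\beta \mapsto \prod_{i=1}^m x_i^{b_i}$ is a bijection from $\mathbb F_{p^m}$ onto the standard monomial basis. Distinct coordinate vectors yield distinct monomials whose exponents already lie in the reduced range, so no two field elements collide after passing to the quotient; in particular $0 \mapsto 1 = q_0$.

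Next I would read off the supports. By definition $q_k=\sum_{\beta \in \mathcal A_k}\prod_{i=1}^m x_i^{b_i}$ is, through the bijection above, precisely the indicator of the subset $\mathcal A_k$ in the standard monomial basis, with every coefficient equal to $1$; likewise $q_0=1$ is the indicator of $\mathcal A_0:=\{0\}$. Because the fibers $\mathcal A_1,\ldots,\mathcal A_n$ together with $\{0\}$ form a partition of $\mathbb F_{p^m}$, the supports of $q_0,q_1,\ldots,q_n$ are pairwise disjoint sets of basis monomials, and each $q_k$ is nonzero since each $\mathcal A_k$ is nonempty (indeed $|\mathcal A_k|=s \geq 1$ for $k \geq 1$).

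Finally, a family of nonzero vectors with pairwise disjoint supports relative to a basis is automatically linearly independent: any vanishing linear combination, restricted to the monomials in the support of a single $q_k$, forces that coefficient to be zero. Hence $\{q_0,\ldots,q_n\}$ is linearly independent, giving $\dim \mathbf V_{n+1}=n+1$. There is essentially no serious obstacle; the one point deserving care is to confirm that reduction modulo the ideal does not merge distinct standard monomials, which it does not, precisely because every coordinate $b_i$ already lies in $\{0,\ldots,p-1\}$, so the supports remain genuinely disjoint after reduction.
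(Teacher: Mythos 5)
Your proof is correct and follows the same route the paper intends: the paper merely remarks that the proposition "easily follows" from the fact that $\{0\}, \mathcal A_1,\ldots,\mathcal A_n$ partition $\mathbb F_{p^m}$, and your argument is exactly that observation carried out in full — the bijection between field elements and standard monomials of the quotient, the resulting pairwise disjoint nonempty supports of $q_0,\ldots,q_n$, and the standard conclusion that disjointly supported nonzero vectors are linearly independent. Your added care about reduction modulo the ideal not merging monomials (since all exponents already lie in $\{0,\ldots,p-1\}$) is a detail the paper leaves implicit, and it is handled correctly.
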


\noindent
The elements of $ \mathcal A_1$ have the following properties:

\begin{lemma}
   \label{lemma1}
Let $p$ be an odd prime, and assuming the above hypotheses, we have
\begin{enumerate}
\item If $s$ is even,  for any $\beta \in \mathcal A_1$ there exists a $\alpha \in \mathcal A_1$ 
  such that $\beta+\alpha=0$. 
\item If $s$ is odd, there exists a coset  $\mathcal E = \eta \mathcal A_1$ such that for any
    $\beta \in \mathcal A_1$ there is a $\alpha \in \mathcal E$ such that $\beta+\alpha=0$.
\end{enumerate}

\noindent
Let $p=2$, then
\begin{enumerate}
\item[3.] In $\mathbb F_{2^m}$, any element $\beta$ is the opposite of itself,
     i.e.  $\beta+\beta=0$.
\end{enumerate}
\end{lemma}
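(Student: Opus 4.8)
The plan is to reduce all three statements to a single question: where does the element $-1$ sit inside the partition $\mathcal A_1 \cup \cdots \cup \mathcal A_n$? Since $p$ is odd, $-1$ is the unique element of order $2$ in the cyclic group $\mathbb F^*_{p^m}$, and expressed through the generator it is $-1 = \gamma^{(p^m-1)/2}$, the exponent being an integer because $p^m-1$ is even. The point is that the opposite of $\beta$ is simply $-\beta = (-1)\cdot \beta$, so whether $-\beta$ stays in $\mathcal A_1$ is controlled entirely by whether $-1$ itself belongs to $\mathcal A_1$.

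Next I would record the membership criterion. Recalling that $\mathcal A_1=\{\gamma^{nt}\}$ is exactly the set of powers $\gamma^h$ with $n \mid h$, the element $-1 = \gamma^{(p^m-1)/2}$ lies in $\mathcal A_1$ if and only if $n$ divides $(p^m-1)/2 = ns/2$. A short parity computation settles this: if $s$ is even then $ns/2 = n(s/2)$ is patently a multiple of $n$, whereas if $s$ is odd then $ns=p^m-1$ even forces $n$ even, and $ns/2 = (n/2)s$ is then not a multiple of $n$ because $s/2 \notin \mathbb Z$. Hence $-1 \in \mathcal A_1$ precisely when $s$ is even, and this dichotomy is the heart of the argument.

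For the case $s$ even we have $-1 \in \mathcal A_1$, so for any $\beta \in \mathcal A_1$ the product $\alpha = (-1)\beta = -\beta$ again lies in $\mathcal A_1$ since $\mathcal A_1$ is multiplicatively closed, giving $\beta+\alpha=0$. For the case $s$ odd we have $-1 \notin \mathcal A_1$, and I would take $\mathcal E$ to be the coset $(-1)\mathcal A_1 = \{-\beta : \beta \in \mathcal A_1\}$, which is one of the fibers $\eta\mathcal A_1$ (one may take $\eta = -1 = \gamma^{(p^m-1)/2}$ as representative); by construction $-\beta \in \mathcal E$ for every $\beta \in \mathcal A_1$, so $\alpha = -\beta$ supplies the required partner. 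Finally, for $p=2$ the statement is immediate: $\mathbb F_{2^m}$ has characteristic $2$, so $\beta+\beta = 2\beta = 0$ for every $\beta$.

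I do not anticipate a real obstacle here; the only place demanding care is the parity bookkeeping behind the membership criterion, namely checking that $(p^m-1)/2$ is an integer and that $ns$ even forces $n$ even when $s$ is odd. Everything else is immediate from the group structure of $\mathcal A_1$ and its cosets.
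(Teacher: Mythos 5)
Your proof is correct and follows essentially the same route as the paper's: both arguments hinge on locating $-1=\gamma^{ns/2}$ relative to $\mathcal A_1$ (it lies in $\mathcal A_1$ exactly when $s$ is even) and then taking $\alpha=-\beta=(-1)\beta$. The paper packages the even case via the factorization $X^s-1=(X^{s/2}-1)(X^{s/2}+1)$ and the odd case via the coset of $\eta=-\gamma^n$, but that is the same coset $(-1)\mathcal A_1$ and the same underlying mechanism as your parity-of-exponent criterion.
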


\begin{proof}
Consider the primitive element  $\gamma$ of $\mathbb F_{p^m}$, then
\begin{enumerate}
 \item If $s$ is even, the elements of $\mathcal A_1$ are all the roots of $X^s-1$,
  which splits as $(X^{s/2}-1)(X^{s/2}+1)$. Let $\eta=\gamma^n$ denote a root of $X^{s/2}+1$,
 and $\beta= \gamma^{2n t}$ be  any root of $X^{s/2}-1$. Since $\eta^{s/2}=-1$, we have 
$$  \beta \eta^{s/2} =-\beta= \gamma^{2n t} \gamma^{n s/2} = \gamma^{(2t+s/2)n} \in \mathcal A_1 ~~,$$
 therefore $ \beta+ \gamma^{(2t+s/2)n} =0$, i.e. $\alpha= \gamma^{(2t+s/2)n}$. 
 \item If $s$ is odd, no power of any element in $\mathcal A_1$ is equal to $-1$. However,
    let $\theta=\gamma^n$
  be a generator of the cyclic group $\mathcal A_1$, then an $\eta=\gamma^t \in \mathbb F_{p^m}$
   certainly exists such that $\theta+\eta=0$. Consider the coset $\eta \mathcal A_1$, therefore for any
 $\beta=\theta^u \in \mathcal A_1$, the element $\zeta=\eta\theta^{u-1}$ is such that $\beta+\zeta=0$
because we have
$$  \beta+\zeta=\theta^u+\eta\theta^{u-1} =\theta^{u-1} (\theta+\eta )= 0   ~, $$
 i.e.  $\alpha=\gamma^t \theta^{u-1} =\gamma^t \gamma^{n(u-1)}=\gamma^{t+n(u-1)}$.
  \item If $p=2$, then we trivially have $\beta+\beta=0$, thus in any fiber $\mathcal A_k$
       in $\mathbb F_{2^m}$,
     the sum of every element with itself is $0$, and the sum of two elements that are not
     in the same fiber is always different from zero. 
 \end{enumerate}
\end{proof}

\noindent
The immediate goal is to show that $\mathbf V_{n+1}$ is actually a $\mathbb Q$-sub-algebra 
of $\mathbb Q[\mathbf x]/ \langle x_1^{p} - 1, x_2^{p} - 1, \ldots , x_m^{p} - 1\rangle$. 
 
\begin{lemma}   
   \label{lemma2}
The following properties hold for the sums of elements of cosets in $\mathbb F^*_{p^m}$ with odd $p$:
\begin{enumerate}
\item If a fixed $u  \in \mathbb F_{p^m}$ can be expressed as the sum $\alpha_1+\alpha_2=u$,
 with $\alpha_1 \in \mathcal A_i$ and $\alpha_2 \in  \mathcal A_j$, then every element
 of the coset $\mathcal A_{k(u)}=u \mathcal A_1$ can be expressed as the sum of
 two elements, one from  $\mathcal A_i$, and one from  $\mathcal A_j$.
\item As a direct consequence of the previous point, the product $q_i(\mathbf x)q_j(\mathbf x)$
   is a linear combination of the basis elements of $\mathbf V_{n+1}$.
\end{enumerate}
\end{lemma}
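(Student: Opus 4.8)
The plan is to settle Part 1 by using the multiplicative action of the subgroup $\mathcal{A}_1$ on the fibers, and then to deduce Part 2 by translating the additive decomposition of field elements into a multiplicative decomposition of monomials in the quotient ring $\mathfrak{R}_n[\mathbf{x}]$.

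For Part 1, I would first record that multiplication by any $\lambda \in \mathcal{A}_1$ fixes every fiber setwise: since $\mathcal{A}_k = \gamma^{k-1}\mathcal{A}_1$ and $\lambda \mathcal{A}_1 = \mathcal{A}_1$, we get $\lambda \mathcal{A}_k = \mathcal{A}_k$. Every element of $\mathcal{A}_{k(u)} = u\mathcal{A}_1$ has the form $\lambda u$ with $\lambda \in \mathcal{A}_1$, so scaling the given decomposition $u = \alpha_1 + \alpha_2$ yields $\lambda u = \lambda\alpha_1 + \lambda\alpha_2$ with $\lambda\alpha_1 \in \mathcal{A}_i$ and $\lambda\alpha_2 \in \mathcal{A}_j$, which is the required decomposition. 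I would then upgrade this to a counting statement, which is what Part 2 actually consumes: for $u \in \mathbb{F}_{p^m}$ let $N_{ij}(u)$ be the number of pairs $(\alpha,\beta) \in \mathcal{A}_i \times \mathcal{A}_j$ with $\alpha + \beta = u$. The map $(\alpha,\beta) \mapsto (\lambda\alpha, \lambda\beta)$ is a bijection between the representations of $u$ and those of $\lambda u$, with inverse given by scaling by $\lambda^{-1}$; hence $N_{ij}$ is constant on each coset $u\mathcal{A}_1$, and I denote its value on $\mathcal{A}_k$ by $c_{ij}^{(k)}$.

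For Part 2, the decisive observation is that monomial multiplication in $\mathfrak{R}_n[\mathbf{x}]$ mirrors addition in $\mathbb{F}_{p^m}$. Writing $\alpha \leftrightarrow [a_1,\ldots,a_m]$ and $\beta \leftrightarrow [b_1,\ldots,b_m]$ and using the relations $x_i^p = 1$ to reduce exponents modulo $p$, one has $\prod_i x_i^{a_i}\cdot\prod_i x_i^{b_i} = \prod_i x_i^{(a_i+b_i)\bmod p}$, whose exponent vector is exactly the coordinate vector of $\alpha + \beta$. Denoting by $M_u(\mathbf{x})$ the monomial attached to $u$ (so $M_0 = 1 = q_0$), I would therefore expand
\[
q_i(\mathbf{x})\,q_j(\mathbf{x}) = \sum_{\alpha \in \mathcal{A}_i}\sum_{\beta \in \mathcal{A}_j}\prod_{l=1}^m x_l^{(\alpha+\beta)_l} = \sum_{u \in \mathbb{F}_{p^m}} N_{ij}(u)\,M_u(\mathbf{x}).
\]
Grouping the nonzero $u$ by their fibers and using that $N_{ij}$ is constant on each one collapses this to $q_i q_j = N_{ij}(0)\,q_0 + \sum_{k=1}^{n} c_{ij}^{(k)}\,q_k$, a linear combination of the basis elements of $\mathbf{V}_{n+1}$. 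The $u=0$ contribution lands on $M_0 = q_0$; Lemma~\ref{lemma1} identifies the fiber $-\mathcal{A}_i$ and so pins down $N_{ij}(0)$, though here it only matters that this term is a scalar multiple of $q_0$.

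The main obstacle is not any single deduction but keeping the two parallel structures aligned: I must apply the reduction $x_i^p = 1$ consistently so that the monomial product genuinely encodes field addition, and I must be sure the counting refinement of Part 1 is precisely what licenses pulling the constant $c_{ij}^{(k)}$ out of each fiber's worth of monomials. Once this correspondence is in place, Part 1 supplies the constancy of $N_{ij}$ on cosets and the conclusion of Part 2 follows immediately.
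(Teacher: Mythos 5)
Your proposal is correct and follows essentially the same route as the paper: Part 1 by scaling the decomposition $u=\alpha_1+\alpha_2$ by elements of $\mathcal A_1$, and Part 2 by the correspondence between monomial multiplication modulo $\langle x_1^p-1,\ldots,x_m^p-1\rangle$ and addition in $\mathbb F_{p^m}$, followed by grouping the resulting monomials by fiber. Your explicit counting function $N_{ij}$ and the bijection $(\alpha,\beta)\mapsto(\lambda\alpha,\lambda\beta)$ merely make rigorous the step the paper states loosely, namely that the $s^2$ monomials of $q_i(\mathbf x)q_j(\mathbf x)$ partition into groups of $s$, each contributing a constant multiple of some $q_k(\mathbf x)$.
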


\begin{proof}
The proof of claim 1 is immediate,  assuming $\alpha_1+\alpha_2=u$, we have
$$   \alpha (\alpha_1+\alpha_2) =\alpha \alpha_1+ \alpha \alpha_2 = \alpha u ~~\forall ~ \alpha \in 
       \mathcal A_1 ~~, $$
and the conclusion follows from the definition of the coset $\mathcal A_{k(u)}=u\mathcal A_1$, and group closure.

\vspace{2mm}

\noindent
The proof of claim 2 is a little more elaborate.  Due to the definition of the monomials
  $m(\mathbf x)$ that form part of the definition of the polynomials $q_k(\mathbf x)$, and the correspondence 
   $m(\mathbf x)\leftrightarrow \eta \in  \mathbb F_{p^m}$, the product of two monomials
   in the ring $\mathbb Q[\mathbf x]/ \langle x_1^{p} - 1, x_2^{p} - 1, \ldots , x_m^{p} - 1\rangle$ 
  corresponds to the sum of the corresponding elements in  $\mathbb F_{p^m}$. 
Now the product $q_i(\mathbf x)q_j(\mathbf x)$ consists of $s^2$ distinct monomials, which
 can be partitioned into groups of $s$ monomials, each group corresponding to
 some polynomial  $q_{k(i,j)}(\mathbf x)$ by the previous claim 1;
  the conclusion follows, by linearity.
\end{proof}

\begin{theorem}   
   \label{lemma3}
Let $2 \leq n|(p^m-1)$, $p$ prime, $m$ positive integer ($m\geq 2$ if $p=2$), and $s=\frac{p^m-1}{n}$.
The $\mathbb Q$-vector space $\mathbf V_{n+1}$ of Proposition \ref{pro1} is a
 $\mathbb  Q$-sub-algebra of the residue ring  
$\mathbb Q[\mathbf x]/ \langle x_1^{p} - 1, x_2^{p} - 1, \ldots , x_m^{p} - 1\rangle$.
 In particular, for every $1 \leq i, j \leq n$ there exist integers $c_{ijk}$ such that
\begin{equation}
   \label{eqsubalg}
   q_i(\mathbf x)q_j(\mathbf x) \bmod \langle x_1^{p} - 1, x_2^{p} - 1, \ldots , x_m^{p} - 1\rangle=c_{ij0}+
        \sum_{k=1}^n c_{ijk} q_k(\mathbf x) ~~.  
\end{equation}
The coefficients $c_{ij0}$ can be explicitly expressed considering $p$ odd and $p=2$ separately: 
\begin{itemize}
   \item[a)] $p$ odd
\begin{enumerate}
   \item  $c_{ii0} = s$ and $c_{ij0} = 0$ for every $j \neq i$ if $s$ is even;
   \item  $c_{ii0} = 0$ and $c_{ij0} = s$ for a suitable pair  $j \neq i$, if $s$ is odd. 
\end{enumerate}

\item[b)] $p=2$, in this case $s$ is always odd, and we have
\begin{enumerate}
   \item  $c_{ii0} = s$, and $c_{ij0} = 0$ for every $j \neq i$.
\end{enumerate}
\end{itemize}
\end{theorem}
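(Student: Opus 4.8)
The plan is to separate the two assertions of the theorem: the closure of $\mathbf V_{n+1}$ under multiplication, which is essentially Lemma \ref{lemma2}, and the explicit evaluation of the constant coefficients $c_{ij0}$, which carries the real content. For closure, I would note that products with $q_0(\mathbf x)=1$ are trivial (since $q_0 q_k = q_k$), while for $1 \leq i,j \leq n$ the product $q_i(\mathbf x)q_j(\mathbf x)$ is a $\mathbb Q$-linear combination of $q_0,\ldots,q_n$ by claim 2 of Lemma \ref{lemma2}. The coefficients are in fact nonnegative integers, because under the monomial--field correspondence $m(\mathbf x)\leftrightarrow\eta\in\mathbb F_{p^m}$ used in that proof, each $c_{ijk}$ simply counts the pairs $(\alpha,\beta)\in\mathcal A_i\times\mathcal A_j$ whose sum $\alpha+\beta$ lands in the fiber $\mathcal A_k$ (with $\mathcal A_0=\{0\}$). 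This yields Equation (\ref{eqsubalg}) at once.

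The crux is therefore the constant term. Since the monomial $1$ corresponds to the zero element of $\mathbb F_{p^m}$, the coefficient $c_{ij0}$ equals the number of pairs $(\alpha,\beta)\in\mathcal A_i\times\mathcal A_j$ with $\alpha+\beta=0$, that is, the number of $\alpha\in\mathcal A_i$ for which $-\alpha\in\mathcal A_j$. The whole computation thus reduces to locating the fiber containing $-\alpha$ once $\alpha\in\mathcal A_i$ is fixed. I would treat $p$ odd first. Writing $\alpha=\gamma^{\,i-1+nt}$ and using $-1=\gamma^{(p^m-1)/2}$, so that $-\alpha=\gamma^{\,h+i-1+nt}$ with $h=ns/2$, the fiber of $-\alpha$ is governed by $(i-1+h)\bmod n$, because the exponent residue modulo $n$ determines the fiber.

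From here the parity of $s$ decides everything. If $s$ is even then $h=n(s/2)$ is a multiple of $n$, so $-\alpha\in\mathcal A_i$ for \emph{every} $\alpha\in\mathcal A_i$; hence all $s$ elements contribute to the diagonal term, giving $c_{ii0}=s$ and $c_{ij0}=0$ for $j\neq i$. If $s$ is odd then $n$ must be even (as $p^m-1=ns$ is even for odd $p$), and $h=(n/2)s\equiv n/2\pmod n$; thus $-\alpha$ always lies in the single fiber $\mathcal A_j$ with $j\equiv i+n/2\pmod n$, which is distinct from $\mathcal A_i$, yielding $c_{ii0}=0$ and $c_{ij0}=s$ for exactly that suitable $j$. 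The structure of this ``negation'' map between fibers is precisely what Lemma \ref{lemma1} records. For $p=2$ the situation collapses: since $-\alpha=\alpha$, the equation $\alpha+\beta=0$ forces $\beta=\alpha$, so a contributing pair exists only when $i=j$, and then there are $|\mathcal A_i|=s$ of them, giving $c_{ii0}=s$ and $c_{ij0}=0$ otherwise. The one point demanding care throughout is the bookkeeping that ties the parity of $s$ to the parity of $n$ and tracks the index shift $h\bmod n$ correctly; the algebraic identities themselves are immediate.
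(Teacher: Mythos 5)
Your proof is correct, but it takes a genuinely different route from the paper's for the crucial part, the evaluation of $c_{ij0}$. The paper, after invoking Lemma \ref{lemma2} for closure (as you do), substitutes $\mathbf x=\mathbf 1$ into (\ref{eqsubalg}) to get $s^2=c_{ij0}+s\sum_k c_{ijk}$, deduces $s\mid c_{ij0}$, combines this with the bound $c_{ij0}\leq s$ to force $c_{ij0}\in\{0,s\}$, and then appeals to Lemma \ref{lemma1} to decide which alternative holds (with a separate squaring-of-monomials argument for $p=2$). You instead identify $c_{ij0}$ outright as the number of $\alpha\in\mathcal A_i$ with $-\alpha\in\mathcal A_j$ and compute the fiber of $-\alpha$ explicitly via $-1=\gamma^{ns/2}$ and exponent arithmetic modulo $n$. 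Your route buys several things: it is self-contained (it re-derives the content of Lemma \ref{lemma1} in passing); it supplies the bound $c_{ij0}\leq s$ that the paper asserts without justification --- indeed that bound really does require your counting interpretation, so your argument quietly repairs a small gap in the paper's proof; and it pins down the ``suitable pair'' concretely as $j\equiv i+n/2\pmod n$ when $s$ is odd, which the paper leaves implicit. The paper's evaluation-at-$\mathbf 1$ trick is shorter once the bound is granted, and generalizes to give relations among all the $c_{ijk}$, not just the constant terms. One small imprecision in your write-up: you say each $c_{ijk}$ ``counts the pairs $(\alpha,\beta)\in\mathcal A_i\times\mathcal A_j$ whose sum lands in the fiber $\mathcal A_k$''; for $k\geq 1$ that count is $s\,c_{ijk}$, since $c_{ijk}$ is the number of pairs summing to any one \emph{fixed} element of $\mathcal A_k$ (constant over the fiber by claim 1 of Lemma \ref{lemma2}). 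This does not affect your main argument, since for $k=0$ the fiber $\{0\}$ is a singleton and your identification of $c_{ij0}$ is exactly right.
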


\begin{proof}
The $\mathbb Q$-vector space $\mathbf V_{n+1}$ is a sub-algebra  
 of $\mathbb Q[\mathbf x]/ \langle x_1^{p} - 1, x_2^{p} - 1, \ldots , x_m^{p} - 1\rangle$  by Lemma \ref{lemma2}.

\vspace{3mm}
\noindent
In general it does not seem possible to obtain a closed form for all constants $c_{ijk}$
 holding for every $p$ and every $m$,  except for  the following exceptions. \\
  Let $\mathbf 1$ be the all-ones $m$-dimensional vector, then we have
$$   q_i(\mathbf 1)q_j(\mathbf 1)=c_{ij0}+\sum_{k=1}^n c_{ijk} q_k(\mathbf 1) ~~,  $$
since  $q_i(\mathbf 1)=s$, we have $s^2 = c_{ij0}+ s \sum_{k=1}^n c_{ijk}$; 
this equation implies that $s|c_{ij0}$;  since the integer $c_{ij0} \leq s$,
it follows that $c_{ij0}$ is either $0$ or $s$.
 If $p$ is odd, by Lemma \ref{lemma1} it follows that 
\begin{enumerate}
   \item  $c_{ii0} = s$ and $c_{ij0} = 0$ for every $j \neq i$ if $s$ is even;
   \item  $c_{ii0} = 0$ and $c_{ij0} = s$ for a suitable pair $j \neq i$, if $s$ is odd. 
\end{enumerate}

\noindent
If $p=2$, then $s$ is necessarily odd, however in $\mathbb F_{p^m}$ every element is
 the opposite of itself, then letting $m_\beta(\mathbf x)$ be the monomial associated to 
 $\beta$, it follows that 
$m_\beta(\mathbf x)^2 \bmod  \langle x_1^{2} - 1, x_2^{2} - 1, \ldots , x_m^{2} - 1\rangle $ is the monomial associated
 to $\beta+\beta=0$, that is the monomial $1$; it follows that 
\begin{enumerate}
   \item  $c_{ii0} = s$ and $c_{ij0} = 0$ for every $j \neq i$.
\end{enumerate}
\end{proof}

\noindent
Theorem \ref{lemma3} shows that the vector space $\mathbf V_{n+1}$ is a commutative sub-algebra 
 with identity of the ring of residue polynomials $\mathbb Q[\mathbf x]/ \langle x_1^{p} - 1, x_2^{p} - 1, \ldots , x_m^{p} - 1\rangle$. 
As observed in the proof of Theorem \ref{lemma3}, in general it seems that the structure constants 
cannot be given in closed form for every prime $p$, extension degree $m$, and power residue exponent $n$, 
thus the computational aspects for obtaining numerical values of every $c_{ijk}$ may be of interest.

\section{Computation of the structure constants}
The structure constants $c_{ijk}$ are easily found in closed form for $n=2$, $m=1$, and any odd $p$;
 however,  for every $m\geq 2$ and $n >2$, in general these constants must be numerically computed
 by means of convenient algorithms. We briefly, describe two different computational methods.
\subsection{Direct method}
For fixed $i,j$, equation (\ref{eqsubalg}) can be directly used to compute the structure constants.
A consistent linear system of $n$ equations in the $n$ unknowns $c_{ijk}$, $k=1, \ldots,n$, can be
 obtained by comparing the coefficients of equal multivariate monomials on the two sides 
 of  (\ref{eqsubalg}); actually, we would obtain a consistent linear system of $n^2$ linear equations in $n$ unknowns. 
 The search for the solution could present some difficulty because the product 
$$ q_1(x_1, \ldots, x_2)q_1(x_1, \ldots, x_2) \bmod \langle x_1^p-1, \ldots ,x_n^p-1 \rangle$$
consists of $n^2$ monomials in some order that, a priori, we do not know. They must all be computed,
 but only $n$ are used. 
When $n$ is small, as in the following examples, the method
 is very efficient, but when $n$ is large, $n^2$ multivariate monomials must be sorted according to 
 some ordering criterion:  this computational issue is left as an open problem.

\begin{example}
Let $p=3$ and $m=2$, thus $p^m-1=8$ and $n$ may be $2$ or $4$, which are the only proper divisors
  of $8$. 
Let $m(z)=z^2+z+2$ be a primitive quadratic polynomial over $\mathbb F_3$. Let $\alpha$ be a root
of $m(z)$, the $9$ elements of $\mathbb F_9$ are
$$   \begin{array}{|c|cr|} \hline
               0 &=  &  0   \\
               1 &=  &  1   \\
            \alpha & = & 0+ \alpha  \\
            \alpha^2 & = & 1+2\alpha  \\
            \alpha^3 & = & 2+2\alpha  \\
            \alpha^4 & = & 2  \\
          \alpha^5 & = & 0+2\alpha  \\
          \alpha^6 & = & 2+\alpha  \\
          \alpha^7 & = & 1+ \alpha  \\  \hline
      \end{array}
$$

\paragraph{Case 1: $ \mathbf n=2, ~s=4$}; we have two fibers (cosets) 
$$  \mathcal A_1=\{ 1,1+2\alpha, 2, 2+\alpha \}   ~~,~~
      \mathcal A_2=\{ \alpha, 2+2\alpha, 2\alpha,1+\alpha    \} ~,  $$ 
and the corresponding characteristic multivariate polynomials are
$$  q_1(x_1,x_2) = x_1+x_1x_2^2+x_1^2+x_1^2x_2 ~~~~,~~~~
      q_2(x_1,x_2) = x_2+x_1^2x_2^2+x_2^2+x_1x_2 ~~. $$
The structure constants of the polynomial algebra of $\mathbf V_3$, with basis
 $\{1,  q_1(x_1,x_2), q_2(x_1,x_2) \}$, are identified by the system
$$  \left\{  \begin{array}{lcl}
            q_1(x_1,x_2)q_1(x_1,x_2) \bmod \langle x_1^3-1,x_2^3-1 \rangle&=& c_{110}+c_{111}q_1(x_1,x_2)+c_{112}q_2(x_1,x_2) \\
            q_1(x_1,x_2)q_2(x_1,x_2) \bmod \langle x_1^3-1,x_2^3-1 \rangle&=& c_{120}+c_{121}q_1(x_1,x_2)+c_{122}q_2(x_1,x_2) \\
            q_2(x_1,x_2)q_2(x_1,x_2) \bmod \langle x_1^3-1,x_2^3-1 \rangle&=& c_{220}+c_{221}q_1(x_1,x_2)+c_{222}q_2(x_1,x_2) \\
    \end{array}   \right.
$$
where the constants with the third index equal to $0$ are known by Theorem \ref{lemma3}
  $c_{110}=4$, $c_{120}=0$, and $c_{220}=4$. \\
To find the remaining $6$ constants with
 the direct method we compute $q_i(x_1,x_2)q_j(x_1,x_2) \bmod \langle x_1^3-1,x_2^3-1 \rangle $
and subtract $(c_{ij1}q_1(x_1,x_2)+c_{ij2}q_2(x_1,x_2))+c_{ij0} $, 
obtaining three multivariate polynomials which must be identically zero
$$  \left\{  \begin{array}{lcl}
            4+(2-c_{112})x_1^2x_2^2+(1-c_{111})x_1^2x_2+(1-c_{111})x_1^2+(1-c_{111})x_1x_2^2+(2-c_{112})x_1x_2 & &  \\
~~~+(1-c_{111})x_1+(2-c_{112})x_2^2+(2-c_{112})x_2-4&=& 0 \\
            (2-c_{122})x_1^2x_2^2+(2-c_{121})x_1^2x_2+(2-c_{121})x_1^2+(2-c_{121})x_1x_2^2+(2-c_{122})x_1x_2 &  & \\
~~~~~+(2-c_{121})x_1+(2-c_{122})x_2^2+(2-c_{122})x_2&=& 0 \\
            4+(1-c_{222})x_1^2x_2^2+(2-c_{221})x_1^2x_2+(2-c_{221})x_1^2+(2-c_{221})x_1x_2^2+(1-c_{222})x_1x_2 & & \\
  ~~~~~   +(2-c_{221})x_1+(1-c_{222})x_2^2+(1-c_{222})x_2-4&=& 0 \\
    \end{array}   \right.
$$
From the first equation we obtain $c_{111}=1,~c_{112}=2$, from the second equation we obtain $c_{121}=2,~c_{122}=2$, and
  from the third equation $c_{221}=2,~c_{222}=1$, which allows us to write the multiplication table with the coefficients
  of the linear combinations (the trivial multiplications by $q_0(x_1,x_2)=1$ are not reported)
$$     \begin{array}{l|ccc}
                                                    & q_0(x_1,x_2)  & q_1(x_1,x_2)     &q_2(x_1,x_2)    \\ \hline
         q_1(x_1,x_2)q_1(x_1,x_2) &   4  &         1              &    2                  \\
         q_1(x_1,x_2)q_2(x_1,x_2) &  0  &         2               &   2                  \\
         q_2(x_1,x_2)q_2(x_1,x_2) &  4  &         2               &  1                  \\
       \end{array}
$$

\vspace{3mm}

\paragraph{Case 2: $\mathbf n=4, ~s=2$}; we have four cosets 
$$  \mathcal A_1=\{ 1, 2 \} ~~,~~\mathcal A_2=\{\alpha , 2\alpha \}   ~~,~~
      \mathcal A_3=\{2+ \alpha, 1+2\alpha\} ~~,~~\mathcal A_4=\{ 2+2\alpha,1+\alpha \} ~,  $$ 
and, correspondingly, the characteristic multivariate polynomials are
$$  q_1(x_1,x_2) = x_1+x_1^2 ~~,~~ q_2(x_1,x_2) = x_2+x_2^2 ~~,~~ q_3(x_1,x_2)=x_1x_2^2+x_1^2x_2
      ~~,~~ q_4(x_1,x_2) = x_1^2x_2^2+x_1x_2 ~~. $$
The multiplication table can be conveniently written as a $4\times 4$ table,
 where rows and columns are orderly indexed by 
  the polynomials $q_i(\mathbf x)$, and the entries are five-tuples of integers which are
  the five coefficients of the linear combinations
$$  \begin{array}{|c||c|c|c|c|} \hline
         &        q_1            & q_2             & q_3               &  q_4    \\   \hline \hline
  q_1 &     [2,1,0,0,0]     &  [0,0,0,1,1]  & [0,0,1,0,1]     &   [0,0,1,1,0]    \\   \hline
  q_2 &     [0,0,0,1,1]     &  [2,0,1,0,0]  & [0,0,1,0,1]     &   [0,0,1,1,0]    \\   \hline
  q_3 &     [0,0,1,0,1]     &  [0,0,1,1,0]  & [2,0,0,1,0]     &   [0,0,1,1,0]     \\   \hline
  q_4 &     [0,0,1,1,0]     &  [0,0,1,1,0]  & [0,0,1,0,1]     &   [2,0,0,0,1]     \\   \hline
       \end{array}
$$
For instance we have
$$  \left\{  \begin{array}{lcl}
 q_1(x_1,x_2)q_1(x_1,x_2) \bmod \langle x_1^3-1,x_2^3-1 \rangle&=&2 +q_1(x_1,x_2) \Rightarrow  [2,1,0,0,0] \\
q_1(x_1,x_2)q_2(x_1,x_2) \bmod \langle x_1^3-1,x_2^3-1 \rangle&=&q_3(x_1,x_2)+q_4(x_1,x_2) \Rightarrow  [0,0,0,1,1]
    \end{array}   \right.
$$
\end{example}

\vspace{3mm}

\begin{example}
Let $m(z)=z^4+z+1$ be a $4$-degree primitive polynomial over $\mathbb F_2$. Let $\alpha$ be a root
of $m(z)$, the $16$ elements of $\mathbb F_{16}$ are
$$   \begin{array}{|c|cr|} \hline
               0 &=  &  0   \\
               1 &=  &  1   \\
            \alpha & = & \alpha  \\
            \alpha^2 & = &  \alpha^2  \\
            \alpha^3 & = &  \alpha^3  \\
            \alpha^4 & = & 1+\alpha  \\
          \alpha^5 & = & \alpha+\alpha^2  \\
          \alpha^6 & = & \alpha^2+\alpha^3  \\
          \alpha^7 & = & 1+ \alpha+\alpha^3  \\ 
            \alpha^8 & = & 1+\alpha^2  \\
            \alpha^9 & = &\alpha+  \alpha^3  \\
            \alpha^{10} & = & 1+\alpha+ \alpha^2  \\
            \alpha^{11} & = & \alpha+\alpha^2+\alpha^3  \\
          \alpha^{12} & = & 1+\alpha+\alpha^2+\alpha^3  \\
          \alpha^{13} & = & 1+ \alpha^2+\alpha^3  \\
          \alpha^{14} & = & 1+\alpha^3  \\  \hline
      \end{array}
$$

\noindent
In this case $n$ may be $3$ or $5$; only $n=3$ is considered, being fully illustrative.

\paragraph{Case: $n=3$, $s=5$}; we have three cosets
$$  \left\{  \begin{array}{lcl}
  \mathcal A_1 &= &\{ 1,\alpha^3, \alpha^2+\alpha^3, \alpha+  \alpha^3, 1+\alpha+\alpha^2+\alpha^3 \} \\ 
   \mathcal A_2 &=& \{ \alpha,1+\alpha , 1+ \alpha+\alpha^3, 1+\alpha+\alpha^2,1+ \alpha^2+\alpha^3\} \\
   \mathcal A_3 &=& \{ \alpha^2, \alpha+\alpha^2, 1+\alpha^2,\alpha+\alpha^2+\alpha^3, 1+\alpha^3 \}
    \end{array}  \right.
  $$ 
and correspondingly three characteristic multivariate polynomials
$$   \left\{  \begin{array}{lcl}
    q_1(x_1,x_2,x_3,x_4) &= & x_1+ x_4 + x_3 x_4+x_2x_4+x_1x_2x_3 x_4   \\ 
     q_2(x_1,x_2,x_3,x_4) &=&  x_2+x_1x_2+x_1x_2x_4+x_1x_2x_3+x_1x_3x_4  \\
     q_3(x_1,x_2,x_3,x_4) &=&  x_3+ x_2x_3+ x_1x_3+ x_2x_3x_4+ x_1x_4  
    \end{array}  \right.~~. $$
Let $\mathbf x=[x_1,x_2,x_3,x_4]$, a basis of $\mathbf V_4$ is $\{1,  q_1(\mathbf x), q_2(\mathbf x),q_3(\mathbf x) \}$,
and the structure constants of the polynomial algebra can be computed from the
 following system of six equations
$$  \left\{  \begin{array}{lcl}
            q_1(\mathbf x)q_1(\mathbf x)&=& c_{110}+c_{111}q_1(\mathbf x)+c_{112}q_2(\mathbf x) +c_{113}q_3(\mathbf x) \\
            q_1(\mathbf x)q_2(\mathbf x)&=& c_{120}+c_{121}q_1(\mathbf x)+c_{122}q_2(\mathbf x) +c_{123}q_3(\mathbf x) \\
            q_1(\mathbf x)q_3(\mathbf x)&=& c_{130}+c_{131}q_1(\mathbf x)+c_{132}q_2(\mathbf x) +c_{133}q_3(\mathbf x) \\
          q_2(\mathbf x)q_2(\mathbf x)&=& c_{220}+c_{221}q_1(\mathbf x)+c_{222}q_2(\mathbf x) +c_{223}q_3(\mathbf x) \\
          q_2(\mathbf x)q_3(\mathbf x)&=& c_{230}+c_{231}q_1(\mathbf x)+c_{232}q_2(\mathbf x) +c_{233}q_3(\mathbf x) \\
          q_3(\mathbf x)q_3(\mathbf x)&=& c_{330}+c_{331}q_1(\mathbf x)+c_{332}q_2(\mathbf x) +c_{333}q_3(\mathbf x) \\
    \end{array}   \right.
$$
Now $c_{110}=c_{220}=c_{330}=5$, and $c_{120}=c_{130}=c_{230}=0$,  then we have to compute only
 $18$ constants instead of $24$.
Proceeding as in the previous example  we obtain all structure constants $c_{ijk}$ and write 
the multiplication table where the coefficients
  of the linear combinations for $ q_i(\mathbf x)q_j(\mathbf x)$ are reported
 in the corresponding row (the trivial multiplications by $q_0(x_1,x_2)=0$ are not reported)
$$     \begin{array}{l|cccc}
                                                    & q_0(\mathbf x)  & q_1(\mathbf x)     &q_2(\mathbf x)  &q_3(\mathbf x)  \\ \hline
         q_1(\mathbf x)q_1(\mathbf x) &  5  &         0               &   2        &     2     \\
         q_1(\mathbf x)q_2(\mathbf x) &  0  &         2               &   2        &      1    \\
         q_1(\mathbf x)q_3(\mathbf x) &  0  &         2               &   1        &      2    \\
         q_2(\mathbf x)q_2(\mathbf x) &  5  &         2               &   0        &      2     \\
         q_2(\mathbf x)q_3(\mathbf x) &  0  &         1               &   2        &      2      \\
         q_3(\mathbf x)q_3(\mathbf x) &  5  &         2               &   2        &      0     \\
      \end{array}
$$

\end{example}

\subsection{A numerical method based on cyclotomic fields}
Let $\mathbb Q(\zeta_{p})$ be the cyclotomic field of $p$-th roots of unity, with $\zeta_{p}$ 
 denoting a primitive root of unity, that is a root of the cyclotomic polynomial  of
 degree $p-1$. Thus $\mathbb Q(\zeta_{p})$ is an extension of degree $p-1$ of 
 $\mathbb Q$.
Let $\mathfrak G_p$ denote the multiplicative cyclic group generated by $\zeta_{p}$. 
Let $\mathbf u =(\zeta^{i_1}_p,\zeta^{i_2}_p, \ldots ,\zeta^{i_m}_p)$ denote an $n$-tuple
  of elements of $\mathfrak G_p$,  thus from the evaluation of equation (\ref{eqsubalg}) for
$\mathbf x =\mathbf u $, we get a polynomial in $\zeta_p$ that is equal to $0$ 
\begin{equation} 
   \label{nummeq}
   c_{ij0}+
        \sum_{k=1}^n c_{ijk} q_k(\mathbf u) - q_i(\mathbf u)q_j(\mathbf u) =0~~.    
\end{equation}
We thus obtain a system of $p$ linear equations with integral coefficients in 
 $n$ unknowns.
If $n\leq p$ a solution is easily obtained, since it certainly exists by Theorem \ref{lemma3}.
If $n>p$ we need more linear equations, then we consider the equations obtained
 using $\ell$ different vectors $\mathbf u$, with the
aim of getting $n$ linearly independent equations.

\begin{example}
Reconsider the problem of example 1. Its solutions by this second method are
  obtained working in  $\mathbb Q(\zeta_{3})$ with $\zeta_3$ a
 primitive complex cubic root of unity. \\
 Take $\mathbf u_0=(1,1)$, and $\mathbf u_1=(1,\zeta_3)$; in this case
 we obtain two equations using (\ref{nummeq}), considering that 
 $c_{110}=4$, $c_{120}=0$, and $c_{220}=4$,  
 $ q_1(1,1)=q_2(1,1)=4$,  $ q_1(1,\zeta_3)=1$, and $q_2(1,\zeta_3)=-2$.
Thus we can write the system
$$    \left\{  \begin{array}{lcl}
            4+4c_{111}+4c_{112} &=& 16 \\
             4+c_{111}-2c_{112}  &=& 1
         \end{array}  \right.
$$
Solving for $ c_{111}, c_{112}$ we obtain  $ c_{111}=1, c_{112}=2$.   \\
Similarly, we obtain all structure constants summarized in the following table 
$$     \begin{array}{l|ccc}
                                                    & 1  & q_1(x_1,x_2)     &q_2(x_1,x_2)    \\ \hline
         q_1(x_1,x_2)q_1(x_1,x_2) &   4  &         1              &    2                  \\
         q_1(x_1,x_2)q_2(x_1,x_2) &  0  &         2               &   2                  \\
         q_2(x_1,x_2)q_2(x_1,x_2) &  4  &         2               &  1                  \\
       \end{array}
$$
\end{example}

\vspace{5mm}

\subsection{A new proof of Perron's original observations}  
The history of the $\mathbb F^*_{p^m}$ partition by the fibers of a given character 
 began with Perron's characterization of the sets or quadratic residues and non-residues 
in prime fields, and several independent proofs have since been given.  A "new" proof is
obtained by specializing the general results given above, and holds for every finite field of odd characteristic . \\
Consider the prime field $\mathbb F_{p^m}$, $p$ odd, and the character $\chi_2$ of order $2$
defined over $\mathbb F^*_{p^m}$.
  Let $\mathcal R$ and $\mathcal N$ be the subsets of
$\mathbb F^*_{p^m}$ of squares and non-squares, respectively, that is
 $\mathcal R=\chi^{-1}(1)$ and $\mathcal N=\chi^{-1}(-1)$.
The corresponding characteristic polynomials are
$$
  q_{\mathcal R}(\mathbf x) = \sum_{\beta\in \mathbb F^*_{p^m}} \frac{1+\chi_2(\beta)}{2} \prod_{i=1}^m x_i^{b_i}  
   ~~,~~ q_{\mathcal N}(x) = \sum_{\beta\in \mathbb F^*_{p^m}} \frac{1-\chi_2(\beta)}{2} \prod_{i=1}^m x_i^{b_i}  ~~,  $$
 depending on whether $\frac{p^m-1}{2}$  is odd or even, we have
\begin{equation}
   \label{eqmain}
\begin{array}{l}
\frac{p^m-1}{2} ~~ \mbox{odd}~~~~
  \left\{  \begin{array}{lcl}
            q_{\mathcal R}(\mathbf x)q_{\mathcal R}(\mathbf x) \bmod \langle x_1^p-1, \ldots,x_m^p-1\rangle&=&~~~~~0+a_{11} q_{\mathcal R}(\mathbf x)+b_{11} q_{\mathcal N}(\mathbf x) \\
q_{\mathcal R}(\mathbf x)q_{\mathcal N}(\mathbf x) \bmod \langle x_1^p-1, \ldots,x_m^p-1\rangle&=&\frac{p-1}{2}+a_{12} q_{\mathcal R}(\mathbf x)+b_{12} q_{\mathcal N}(\mathbf x) \\
 q_{\mathcal N}(\mathbf x)q_{\mathcal N}(\mathbf x) \bmod \langle x_1^p-1, \ldots,x_m^p-1\rangle&=&~~~~~0+a_{22} q_{\mathcal R}(\mathbf x)+b_{22} q_{\mathcal N}(\mathbf x) \\
    \end{array}   \right. \\
\\
 \frac{p^m-1}{2} ~~ \mbox{even}~~~~  \left\{  \begin{array}{lcl}
            q_{\mathcal R}(\mathbf x)q_{\mathcal R}(\mathbf x) \bmod \langle x_1^p-1, \ldots,x_m^p-1\rangle&=&\frac{p-1}{2}+a_{11} q_{\mathcal R}(\mathbf x)+b_{11} q_{\mathcal N}(\mathbf x) \\
q_{\mathcal R}(\mathbf x)q_{\mathcal N}(\mathbf x) \bmod \langle x_1^p-1, \ldots,x_m^p-1\rangle&=&~~~~~0+a_{12} q_{\mathcal R}(\mathbf x)+b_{12} q_{\mathcal N}(\mathbf x) \\
 q_{\mathcal N}(\mathbf x)q_{\mathcal N}(\mathbf x) \bmod \langle x_1^p-1, \ldots,x_m^p-1\rangle&=&\frac{p-1}{2}+a_{22} q_{\mathcal R}(\mathbf x)+b_{22} q_{\mathcal N}(\mathbf x) \\
    \end{array}   \right.
\end{array}
\end{equation}

\noindent
Let $\mathbf u_o$ be the vector of all ones, then we have
$$   q_{\mathcal R}(\mathbf u_o) = \sum_{\beta\in \mathbb F^*_{p^m}} \frac{1+\chi_2(\beta)}{2}  =
 \frac{p^m-1}{2}  ~~,~~q_{\mathcal N}(\mathbf u_o) = \sum_{\beta\in \mathbb F^*_{p^m}} \frac{1-\chi_2(\beta)}{2}  =   \frac{p^m-1}{2}    $$
$$   q_{\mathcal R}(-\mathbf u_o) = \sum_{\beta\in \mathbb F^*_{p^m}} \frac{1+\chi_2(\beta)}{2} (-1)^{\sum_{i=1}^m b_i} =
t ~,~q_{\mathcal N}(-\mathbf u_o) = \sum_{\beta\in \mathbb F^*_{p^m}} \frac{1-\chi_2(\beta)}{2} 
(-1)^{\sum_{i=1}^m b_i} =   -t  $$
If $q_{\mathcal R}(-\mathbf u_o)=0$, it is necessary to use a vector $\mathbf u$ different from $-\mathbf u_o$: there are $2^m-2$ possible choices for $\mathbf u \neq -\mathbf u_o$, and one of them certainly works because of Theorem \ref{lemma3}. 

\vspace{5mm}

\end{document}